\newtheorem{theorem}{Theorem}[section]
\newtheorem{lem}[theorem]{Lemma}
\newtheorem{prop}[theorem]{Proposition}
\theoremstyle{definition}
\theoremstyle{remark}
\newtheorem{rem}[theorem]{Remark}
\numberwithin{equation}{section}
\newcommand{\rad}[1]{\textup{\rm rad}({#1})}
\newcommand{\rada}{{\textup{\rm rad}(A)}}
\newcommand{\prim}[1]{\textup{\rm Prim}({#1})}
\newcommand{\prima}{{\textup{\rm Prim}(A)}}
\newcommand{\CC}{{\mathbb C}}
\newcommand{\rest}[2]{{{#1}_{\kern-.5pt|{#2}}}}
\newcommand{\idI}{{\mathscr I}}
\def\C*{{\sl C*}-algebra}
\def\Cs*{{\sl C*}-subalgebra}
\begin{document}
\title[Spectral isometries on non-simple \textsl{C*}-algebras]{Spectral isometries on non-simple \textsl{C*}-algebras}

\author{Martin Mathieu}
\address{Department of Pure Mathematics, Queen's University Belfast, Bel\-fast BT7 1NN\\ Northern Ireland}
\email{m.m@qub.ac.uk}
\author{Ahmed R.\ Sourour}
\address{Department of Mathematics and Statistics, University of Victoria, Victoria, BC, Canada V8W 3R4}
\email{sourour@math.uvic.ca}

\subjclass[2000]{47A65, 47A10, 47B48, 46H10, 46L05}
\keywords{Spectral isometries, spectrally bounded operators,  Jordan isomorphisms, C*-algebras}
\thanks{This paper was written during a visit of the first-named author to the University of Victoria in May 2011
supported by a Research in Pairs grant of the London Mathematical Society.
The second-named author's research is supported by an NSERC discovery grant.}

%\date{\today}

\begin{abstract}
We prove that unital surjective spectral isometries on certain non-simple unital \C*s are Jordan isomorphisms.
Along the way, we establish several general facts in the setting of semisimple Banach algebras.
\end{abstract}

\maketitle

\section{Introduction}\label{sect:intro}

\noindent
Let $A$ and $B$ be unital \C*s. A linear mapping $T\colon A\to B$ preserving the spectral radius of every element,
that is, $r(Ta)=r(a)$ for every $a\in A$, is called a \textit{spectral isometry}. This is a seemingly very weak property,
and one wonders how much more of the structure of a \C* such a mapping may be able to preserve. It is known that,
if $A$ is commutative and $T$ is unital, that is, $T1_A=1_B$, and surjective, then $T$ is an algebra isomorphism
\cite[Proposition~2.2]{Mat95}. It is also known that, if $A$ is finite-dimensional and $T$ is a unital surjective spectral
isometry, then $T$ preserves squares, that is, $T(a^2)=(Ta)^2$ for every $a\in A$ \cite[Corollary~5]{MaSou}; see
also \cite[Corollary~1.4]{Co07}. The latter property of $T$ implies that $T$ preserves the Jordan structure, that is,
is a \textit{Jordan homomorphism\/}: $T(ab+ba)=(Ta)(Tb)+(Tb)(Ta)$ for all $a,b\in A$.
In the non-commutative setting, this is the most one can hope for.

Examples of spectral isometries are spectrum-preserving mappings---$\sigma_B(Ta)=\sigma_A(a)$ for every $a\in A$;
these have been studied intensively over the past decades. Aupetit showed in \cite{Aup00} that every surjective spectrum-preserving
linear mapping between von Neumann algebras is a bijective Jordan homomorphism (i.e., a \textit{Jordan isomorphism}).
It is not known (yet) whether this extends to general unital \C*s. Nevertheless, in~\cite{MaSc}, the first-named author
conjectured that this conclusion should hold for all unital surjective spectral isometries.
Many results confirming this conjecture under additional hypotheses have appeared over the last few years,
see \cite{BeBouSa08}, \cite{Co09}, \cite{CoRe}, \cite{Mat95}, \cite{Mat09},  \cite{MMRud} and \cite{MaSou},  for example.
Sometimes, the conclusion that $T$ preserves the Jordan product is even valid under the weaker assumption that $T$
is \textit{spectrally bounded}, i.e., $r(Ta)\leq M\,r(a)$ for all $a\in A$ and some constant $M>0$, cf.~\cite{LinM}, \cite{MM04b}, \cite{MaSc2}.
However, in the presence of traces, one cannot expect such general statement; for a detailed discussion of this, see
Section~5 in~\cite{Mat09}.

Some of the methods to obtain the conclusion that a spectral isometry is a Jordan isomorphism are global in nature;
see, for instance, \cite{Co09}, \cite{LinM} or~\cite{MaSc2}. Others are reduction techniques using suitable quotients
of the algebras in question. In this paper, we shall follow this second approach by combining ideas from~\cite{CoRe},
\cite{Mat09}, \cite{MMRud} and~\cite{MaSou}. Our main result, Theorem~\ref{thm:main}, states that a unital spectral isometry
from a unital \C* with real rank zero and without tracial states that contains enough maximal ideals onto a unital \C* must be
a Jordan isomorphism. This is obtained by first working out a reduction to simple quotients which is valid in general semisimple
Banach algebras in Section~\ref{sect:pre} and then applying results known for simple \C*s, in particular~\cite{MM04b}.
The same approach works for separable unital \C*s with Hausdorff spectrum (Theorem~\ref{thm:typeI-hausdorff-sep}),
thus generalising results obtained in~\cite{MMRud}.

We conclude this introduction with a few comments on the general setting of our results.
Suppose $T\colon A\to B$ is a spectral isometry between two unital Banach algebras $A$ and~$B$.
The surjectivity assumption is inevitable, as is well known. If the domain of  $T$ is semisimple, then
$T$ is injective \cite[Proposition~4.2]{MaSc} and the inverse $T^{-1}$ is a bijective spectral isometry as well.
In this case, the codomain is semisimple too (Lemma~\ref{lem:rad-inv} below) and it follows that $T$ is bounded
\cite[Corollary~5.46]{All11}. The open mapping theorem entails that $T^{-1}$ is bounded as well.
When $T$ is non-unital and $A$ and $B$ are \C*s, then $T1$ is a central unitary;
thus, replacing $T$ by $\tilde T$ defined by $\tilde T(x)=(T1)^{-1}Tx$, $x\in A$ we can reduce the general to the unital case
(see the proof of \cite[Corollary~2.6]{LinM}).

\section{Preparations}\label{sect:pre}

\noindent
In this section we collect together several auxiliary results needed for the main results in the next section.
These are in fact valid in the framework of (semisimple) Banach algebras.
When $A$ is a Banach algebra, $\rada$ will denote its Jacobson radical and $Z(A)$ its centre.

The first lemma is easily deduced from known properties of spectral isometries, see, e.g.,~\cite{MaSc},
but we include a proof for completeness.
\begin{lem}\label{lem:rad-inv}
Let\/ $A$ and\/ $B$ be unital Banach algebras. Let\/ $T\colon A\to B$ be a surjective spectral isometry.
Then\/ $T\,\rada=\rad B$.
\end{lem}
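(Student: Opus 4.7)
The plan is to invoke Zem\'anek's spectral characterisation of the Jacobson radical: an element $x$ of a Banach algebra $A$ satisfies $x\in\rada$ if and only if $r(x+a)=r(a)$ for every $a\in A$. Once this is in hand, the statement is a one-line verification in each direction, exploiting surjectivity of $T$ together with the spectral isometry property.

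First I would prove the inclusion $T\,\rada\subseteq\rad{B}$. Fix $x\in\rada$ and let $b\in B$ be arbitrary. By surjectivity of $T$ choose $a\in A$ with $Ta=b$. Then, using linearity of $T$, the spectral isometry property twice, and Zem\'anek's characterisation applied to $x$, one has
\[
r(Tx+b)=r(T(x+a))=r(x+a)=r(a)=r(Ta)=r(b).
\]
Since $b\in B$ was arbitrary, Zem\'anek's characterisation (in $B$) gives $Tx\in\rad{B}$.

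For the reverse inclusion $\rad{B}\subseteq T\,\rada$, pick $y\in\rad{B}$ and use surjectivity of $T$ to write $y=Tx$ for some $x\in A$. For any $a\in A$,
\[
r(x+a)=r(T(x+a))=r(y+Ta)=r(Ta)=r(a),
\]
where the first equality is the spectral isometry property, the second is linearity of $T$, and the third applies Zem\'anek's characterisation to $y\in\rad{B}$. Hence $x\in\rada$ and therefore $y=Tx\in T\,\rada$.

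I do not anticipate any real obstacle here; the entire argument is the symmetric use of Zem\'anek's characterisation together with surjectivity of $T$. The only point meriting a brief comment is that Zem\'anek's characterisation is valid in any (not necessarily unital) Banach algebra, so invoking it in $A$ and $B$ raises no issue under the hypotheses of the lemma.
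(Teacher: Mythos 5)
Your proof is correct and follows essentially the same route as the paper's: pull back via surjectivity, apply the spectral isometry property, and invoke Zem\'anek's characterisation of the radical in both directions. The only cosmetic difference is that the paper works with the quasinilpotent formulation ($x\in\rada$ iff $r(x+q)=0$ for all quasinilpotent $q$), whereas you use the equivalent formulation $r(x+a)=r(a)$ for all $a$; both are standard and the arguments are interchangeable.
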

\begin{proof}
Take $a\in\rada$ and $y\in B$ such that $r(y)=0$. Choose $x\in A$ with $y=Tx$; then, by hypothesis, $r(x)=r(y)=0$.
It follows that
\begin{equation*}
r(Ta+y)=r(T(a+x))=r(a+x)=0
\end{equation*}
so that $Ta\in\rad B$, by Zem\'anek's characterisation of the radical \cite[Theorem~5.40]{All11}.

Conversely, take $b\in\rad B$ and let $a\in A$ be such that $b=Ta$. Let $x\in A$ be quasinilpotent. Then
\begin{equation*}
r(a+x)=r(T(a+x))=r(b+Tx)=0
\end{equation*}
since $Tx$ is quasinilpotent. As before, it follows that $a\in\rada$ wherefore $b\in T\,\rada$.
We conclude that $T\,\rada=\rad B$.
\end{proof}
In particular, the image of a semisimple Banach algebra under a surjective spectral isometry  is semisimple.

The next result, too,  is an immediate consequence of known results together with Lemma~\ref{lem:rad-inv}.
Compare also with \cite[Proposition~2.2]{Mat95}.
\begin{lem}\label{lem:commutative}
Let\/ $T\colon A\to B$ be a unital surjective spectral isometry from the semisimple unital Banach algebra\/ $A$ onto the unital
Banach algebra\/ $B$. If\/ $A$ is commutative then\/ $B$ is commutative.
\end{lem}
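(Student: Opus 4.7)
The plan is to combine three ingredients: semisimplicity of $B$ (which follows from Lemma~\ref{lem:rad-inv}), subadditivity of the spectral radius on $A$ (which is immediate from commutativity), and a classical theorem transferring commutativity from subadditivity of $r$ on a semisimple Banach algebra.

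First I would observe that $B$ is semisimple. Indeed, since $A$ is semisimple, $\rada = \{0\}$, so by Lemma~\ref{lem:rad-inv} we have $\rad B = T\rada = T(0) = \{0\}$. (Note: unitality of $T$ is not even needed for this step.)

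Next, I would transfer subadditivity of $r$ from $A$ to $B$. For a commutative Banach algebra $A$, the spectral radius is subadditive: $r(a_1+a_2)\le r(a_1)+r(a_2)$ for all $a_1,a_2\in A$. Given any $b_1,b_2\in B$, surjectivity of $T$ provides $a_i\in A$ with $Ta_i=b_i$. Since $T$ is a spectral isometry,
\begin{equation*}
r(b_1+b_2)=r\bigl(T(a_1+a_2)\bigr)=r(a_1+a_2)\le r(a_1)+r(a_2)=r(b_1)+r(b_2).
\end{equation*}
Thus $r$ is subadditive on $B$.

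Finally, I would invoke the classical theorem (due to Aupetit, building on Le Page and Zem\'anek) that a Banach algebra on which the spectral radius is subadditive is commutative modulo its Jacobson radical. Since $B$ is semisimple by Step~1, this forces $B$ itself to be commutative.

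There is no real obstacle here: everything is a clean consequence of Lemma~\ref{lem:rad-inv} and standard commutativity criteria for Banach algebras with subadditive spectral radius. The only care needed is to cite the correct version of the Le Page--Aupetit theorem (available, for instance, in Allan's textbook cited in the paper).
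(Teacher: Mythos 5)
Your proof is correct, but it follows a genuinely different route from the paper's. The paper's argument is a one-liner: after noting that $B$ is semisimple via Lemma~\ref{lem:rad-inv}, it invokes \cite[Corollary~4.4]{MaSc}, which says that a unital surjective spectral isometry between semisimple unital Banach algebras maps the centre onto the centre; since $Z(A)=A$ by commutativity, this gives $B=TA=TZ(A)=Z(B)$. You instead transfer the subadditivity of the spectral radius from the commutative algebra $A$ to $B$ and then apply the Le Page--Zem\'anek--Aupetit commutativity criterion (a semisimple Banach algebra on which $r$ is subadditive is commutative); all three of your steps are sound. Your route buys a small bonus: nowhere do you use $T1_A=1_B$, so your argument proves the lemma without the unitality hypothesis, whereas the centre-preservation result used in the paper does require a unital map. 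What the paper's route buys is brevity and consistency with the machinery ($TZ(A)=Z(B)$) that is implicitly in play elsewhere in the article. One small bibliographical caution: the commutativity criterion you need is most safely cited from Aupetit's Primer \cite{Aup91} (the section on spectral characterizations of commutativity) rather than from Allan's textbook, where I would not take its presence for granted.
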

\begin{proof}
By Lemma~\ref{lem:rad-inv}, $B$ is semisimple and by \cite[Corollary~4.4]{MaSc}, $B=TA=TZ(A)=Z(B)$ so that $B$ is commutative.
\end{proof}
The following observation allows us conveniently to pass to the quotient of a spectral isometry.
\begin{lem}\label{lem:quotient-spisom}
Let\/ $S\colon B\to B$ be a unital surjective spectral isometry on a unital Banach algebra~$B$.
Let\/ $I$ be a closed ideal of\/ $B$ such that the quotient algebra\/ $B/I$ is semisimple.
If\/ $r(Sy+I)=r(y+I)$ for all\/ $y\in B$ then\/ $S$ induces a unital surjective spectral isometry\/ $S^I\colon B/I\to B/I$ such that\/
$S^I(y+I)=Sy+I$ for all\/ $y\in B$.
\end{lem}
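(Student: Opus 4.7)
The plan is to reduce everything to a single well-definedness question. Once $SI\subseteq I$ is known, the formula $S^I(y+I):=Sy+I$ is automatically linear, unital (since $S1=1$), surjective (since $S$ is), and a spectral isometry by the very hypothesis. So I would focus on proving $SI\subseteq I$, after which no further work is needed.

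Since $B/I$ is semisimple, $\rad{B/I}=0$, and by Zem\'anek's characterisation of the radical (as applied in the proof of Lemma~\ref{lem:rad-inv}), establishing that $Sy+I=0$ for a given $y\in I$ reduces to verifying that
$$r\bigl((Sy+I)+(z+I)\bigr)=r(z+I)\qquad\text{for every }z\in B.$$

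The key manoeuvre is to exploit surjectivity of $S$: write $z=Sw$ for some $w\in B$. Then $Sy+z=S(y+w)$, and because $y\in I$ the cosets $y+w+I$ and $w+I$ agree. Applying the hypothesis $r(Su+I)=r(u+I)$ twice, once to $u=y+w$ and once to $u=w$, collapses both $r(Sy+z+I)$ and $r(z+I)$ to the common value $r(w+I)$, yielding the required equality.

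The main obstacle is conceptual: the hypothesis only controls spectral radii of the \emph{specific} cosets $Sy+I$, not the inclusion $S(I)\subseteq I$ directly. Surjectivity of $S$ together with semisimplicity of $B/I$ let one upgrade this pointwise spectral information to the inclusion via Zem\'anek's theorem, and this is the one genuine insight in the argument.
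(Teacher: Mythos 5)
Your proposal is correct and follows essentially the same route as the paper: reduce everything to $SI\subseteq I$, use surjectivity to write $z=Sw$ so that $Sy+z=S(y+w)$, apply the hypothesis $r(Su+I)=r(u+I)$ to both $u=y+w$ and $u=w$, and conclude via Zem\'anek's characterisation and the semisimplicity of $B/I$. The paper phrases the same two evaluations as the values at $\lambda=0$ and $\lambda=1$ of the function $\lambda\mapsto r(\lambda(Sx+I)+y+I)$ and invokes Liouville's theorem, but since the chain of equalities already exhibits that function as constant, your streamlined version loses nothing.
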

\begin{proof}
Clearly, all we need to show is that $SI\subseteq I$; in this case, $S^I(y+I)=Sy+I$, $y\in B$ defines a unital surjective mapping on $B/I$
which is a spectral isometry by assumption.

Let $x\in I$ and take $y\in B$. Choose $x'\in B$ such that $y=Sx'$. For each $\lambda\in\CC$, we have
\begin{equation*}
\begin{split}
r\bigl(\lambda(Sx+I)+y+I\bigl) &= r(\lambda Sx+y+I)\\
                                                     &=r(S(\lambda x+x')+I)\\
                                                     &=r(\lambda x+x'+I)=r(x'+I).
\end{split}
\end{equation*}
Applying Liouville's theorem \cite[Corollary~5.43]{All11} to the subharmonic function $\lambda\mapsto r\bigl(\lambda(Sx+I)+y+I\bigl)$
we find that
\[
r(Sx+I+y+I)=r(y+I)\qquad(y\in B).
\]
Zem\'anek's characterisation of the radical implies that $Sx+I\in\rad{B/I}=0$ which yields that $Sx\in I$.
\end{proof}
We now establish a key result for our main theorem; the method of proof is inspired by~\cite{CoRe}.
Here, and in the following, $X^c$ denotes the commutant
\[
X^c=\{y\in A\mid yx=xy\ \text{for all}\ x\in X\}
\]
of a subset $X\subseteq A$ in~$A$.
\begin{lem}\label{lem:modI}
Let\/ $A$ and\/ $B$ be unital semisimple Banach algebras, and let\/ $T\colon A\to B$ be a unital surjective
spectral isometry. Let\/ $I$ be a closed ideal of\/ $B$ such that\/ $B/I$ is semisimple and that each unital
surjective spectral isometry\/ $S\colon B/I\to B/I$ is multiplicative or anti-multiplicative. Let\/ $a\in A$ and put\/
$A_0=\{a\}^{cc}$. Let\/ $B_0=TA_0$. For all\/ $b_1,b_2\in B_0$ and\/ $x\in\{a\}^c$, we have
\begin{equation}\label{eq:commuting}
T^{-1}(b_1b_2)\,x+I=x\,T^{-1}(b_1b_2)+I.
\end{equation}
\end{lem}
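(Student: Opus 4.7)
The plan is to build, for each $x \in \{a\}^c$, a holomorphic family $(\bar S_\lambda)_{\lambda \in \CC}$ of unital surjective spectral isometries of $B/I$ that fix the image of $B_0$ pointwise, apply the multiplicative-or-anti-multiplicative dichotomy supplied by the hypothesis, and read off \eqref{eq:commuting} from a first-order expansion at $\lambda=0$.

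Since $A_0 = \{a\}^{cc}$ is contained in $\{a\}^c$, every element of $A_0$ commutes with $x$, so the inner automorphism $\phi_\lambda(u) = e^{\lambda x} u e^{-\lambda x}$ of $A$ restricts to the identity on $A_0$. Setting $S_\lambda := T\phi_\lambda T^{-1} \colon B \to B$ therefore produces a unital surjective spectral isometry of $B$ that fixes $B_0 = TA_0$ pointwise. The main technical step is to push $S_\lambda$ down to $B/I$. I would do this via a subharmonicity--Liouville argument in the spirit of Lemma~\ref{lem:quotient-spisom}: for each fixed $y \in B$, the map $\lambda \mapsto S_\lambda(y)$ is entire in $B$ (since $T$ and $T^{-1}$ are bounded and $\lambda \mapsto e^{\lambda x} z e^{-\lambda x}$ is entire in $A$), so by Vesentini's theorem $\lambda \mapsto r(S_\lambda(y) + I)$ is subharmonic on $\CC$ and bounded above by $r(S_\lambda(y)) = r(y)$. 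Liouville for subharmonic functions \cite[Cor.~5.43]{All11} forces it to be constant, hence equal to $r(y+I)$ for every $\lambda$, and Lemma~\ref{lem:quotient-spisom} then delivers a unital surjective spectral isometry $\bar S_\lambda \colon B/I \to B/I$ with $\bar S_\lambda(y+I) = S_\lambda(y) + I$, which still fixes $b + I$ for every $b \in B_0$.

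By the standing hypothesis, each $\bar S_\lambda$ is multiplicative or anti-multiplicative on $B/I$. With $c = T^{-1}(b_1 b_2) \in A$, this forces
\[
S_\lambda(b_1 b_2) + I \;\in\; \{\, b_1 b_2 + I,\; b_2 b_1 + I\,\} \qquad (\lambda \in \CC).
\]
The entire $B/I$-valued function $\lambda \mapsto S_\lambda(b_1 b_2) - b_1 b_2 + I$ therefore takes values in the (at most) two-point discrete subset $\{\,0,\; b_2 b_1 - b_1 b_2 + I\,\}$; being continuous on the connected space $\CC$ with value $0$ at $\lambda = 0$, it must be identically zero. Hence $S_\lambda(b_1 b_2) - b_1 b_2 \in I$ for every $\lambda \in \CC$.

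Expanding $\phi_\lambda(c) = c + \lambda[x,c] + O(\lambda^2)$ and applying the bounded linear map $T$ gives $S_\lambda(b_1 b_2) = b_1 b_2 + \lambda\, T([x,c]) + O(\lambda^2)$ in $B$. Dividing by $\lambda$ and letting $\lambda \to 0$, where closedness of $I$ ensures the limit remains in $I$, yields $T([x,c]) \in I$, which is the congruence~\eqref{eq:commuting}. I expect the descent step to $B/I$ to be the main obstacle; the dichotomy is then handled cleanly by the two-point-image argument above.
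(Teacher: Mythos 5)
Your proposal is correct and follows essentially the same route as the paper's proof: the same one-parameter family $S_{\lambda,x}=T\circ\operatorname{Ad}(e^{\lambda x})\circ T^{-1}$ fixing $B_0$ pointwise, the same subharmonicity--Liouville descent to $B/I$ via Lemma~\ref{lem:quotient-spisom}, and the same differentiation at $\lambda=0$. The one harmless (and arguably cleaner) deviation is in how the dichotomy is exploited: the paper proves that the set of $\lambda$ for which $S^I_{\lambda,x}$ is multiplicative is clopen in $\CC$, treating commutative $B/I$ separately via Nagasawa's theorem, whereas you apply connectedness directly to the two-point-valued map $\lambda\mapsto S_\lambda(b_1b_2)-b_1b_2+I$, which covers both cases at once.
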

\begin{proof}
Let $x\in\{a\}^c$ and $\lambda\in\CC$. Using the inverse spectral isometry $T^{-1}\colon B\to A$ we define
$S_{\lambda,x}\colon B\to B$ by $S_{\lambda,x}(y)=T\bigl(e^{-\lambda x}\,T^{-1}(y)\,e^{\lambda x}\bigr)$, $y\in B$.
Then $S_{\lambda,x}$ is a unital surjective spectral isometry of~$B$ with $S_{0,x}=\textup{id}_B$ and
$\rest{S_{\lambda,x}}{B_0}=\textup{id}_{B_0}$.
We have
\begin{equation*}
r(S_{\lambda,x}(y)+I) \leq r\bigl(T\bigl(e^{-\lambda x}\,T^{-1}(y)\,e^{\lambda x}\bigr)\bigr)=r(y)
\end{equation*}
independent of $x$ and~$\lambda$. Therefore the subharmonic function $\lambda\mapsto r(S_{\lambda,x}(y)+I)$
is constant by Liouville's theorem and, taking $\lambda=0$, we obtain
\begin{equation}
r(S_{\lambda,x}(y)+I)=r(y+I)\qquad(y\in B,\,x\in\{a\}^c,\,\lambda\in\CC).
\end{equation}
By Lemma~\ref{lem:quotient-spisom}, $S_{\lambda,x}I\subseteq I$ and we obtain an induced unital surjective
spectral isometry $S^I=S^I_{\lambda,x}$ on $B/I$.
By hypothesis, $S^I$ is either multiplicative or anti-multiplicative. Evidently, $S^I_{0,x}$ is the identity on~$B/I$.

Suppose $B/I$ is commutative. Then, by Nagasawa's theorem, $S^I$ is multiplicative. Hence, for the purpose of the remaining
argument, we can assume that $B/I$ is non-commutative. For fixed $x$, the set
$\{\lambda\in\CC\mid S^I_{\lambda,x}\text{ is multiplicative}\}$
contains~$0$ and is closed since $\lambda\mapsto S^I_{\lambda,x}$ is continuous.
Suppose that the above set is not open and take a point $\lambda_0$ in it which is an adherence point of the complement.
Using the continuity of $\lambda\mapsto S^I_{\lambda,x}$ again, we conclude that
\[
S^I_{\lambda_0,x}(b_1+I)\,S^I_{\lambda_0,x}(b_2+I)=S^I_{\lambda_0,x}(b_2+I)\,S^I_{\lambda_0,x}(b_1+I)\qquad(b_1,b_2\in B),
\]
as $S^I_{\lambda_0,x}$ is both multiplicative and anti-multiplicative. However, as $S^I_{\lambda_0,x}$ is surjective, it
follows that $B/I$ is commutative, a contradiction. We thus conclude that $S^I_{\lambda,x}$ is an algebra
isomorphism for all $\lambda\in\CC$ and every $x\in\{a\}^c$.

\smallskip
Let $b_1,b_2\in B_0$. As $S_{\lambda,x}(b_i)=b_i$, $i=1,2$, we obtain
\begin{equation*}
S^I_{\lambda,x}(b_1b_2+I)=S^I_{\lambda,x}(b_1+I)\,S^I_{\lambda,x}(b_2+I)
                                                            =(b_1+I)\,(b_2+I)
\end{equation*}
or, equivalently,
\begin{equation}\label{eq:mult-modI}
T\bigl(e^{-\lambda x}\,T^{-1}(b_1b_2)\,e^{\lambda x}\bigr)+I=(b_1+I)\,(b_2+I)\qquad(\lambda\in\CC).
\end{equation}
Taking derivatives in~\eqref{eq:mult-modI}, evaluating at $\lambda=0$ and using that $T$ is injective we find that
\begin{equation}
T^{-1}(b_1b_2)\,x-x\,T^{-1}(b_1b_2)+I=0
\end{equation}
as claimed.
\end{proof}

\section{Results}\label{sect:results}

\noindent
Our first result in this section explains the significance of commutative subalgebras for the general conjecture.
For an element $a$ in a unital \C* $A$, $C^*(a)$ stands for the unital \Cs* of $A$ generated by~$a$.
\begin{prop}\label{prop:comm-subalg}
Let\/ $A$ be a unital \C* and let\/ $B$ be a unital Banach algebra. The following conditions on a unital surjective
spectral isometry\/ $T\colon A\to B$ are equivalent.
\begin{enumerate}[\rm(a)]
\item $T$ is a Jordan isomorphism;
\smallskip
\item $TA_0$ is a subalgebra of\/ $B$ for every commutative unital subalgebra\/ $A_0$ of\/~$A$;
\smallskip
\item $T\bigl(\{a\}^{cc}\bigr)$ is a subalgebra of\/ $B$ for every element\/ $a\in A_{sa}$;
\smallskip
\item $T\,C^*(a)$ is a subalgebra of\/ $B$ for every element\/ $a\in A_{sa}$.
\end{enumerate}
\end{prop}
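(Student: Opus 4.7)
The plan is to close the cycle (a) $\Rightarrow$ (b) $\Rightarrow$ (c) $\Rightarrow$ (d) $\Rightarrow$ (a). The three implications (b) $\Rightarrow$ (c) $\Rightarrow$ (d) $\Rightarrow$ (a) each reduce to the commutative case captured by \cite[Proposition~2.2]{Mat95} cited in the Introduction; the remaining step (a) $\Rightarrow$ (b) requires genuine Jordan theory and should be the main obstacle.

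For (d) $\Rightarrow$ (a): given $a \in A_{sa}$, the subalgebra $C^*(a)$ is commutative and semisimple, and by hypothesis $T|_{C^*(a)}\colon C^*(a) \to T(C^*(a))$ is a unital surjective spectral isometry onto a Banach subalgebra of~$B$. The commutative case of the conjecture then upgrades this restriction to an algebra isomorphism, so $T(a^2) = T(a)^2$ for every $a \in A_{sa}$. Polarization and complex linearity yield $T(ab + ba) = T(a)T(b) + T(b)T(a)$ on all of $A$, and the bijectivity of $T$ (from semisimplicity of~$A$, recorded in the Introduction) makes $T$ a Jordan isomorphism. The same scheme handles (c) $\Rightarrow$ (d), the extra ingredient being that $\{a\}^{cc}$ is itself a commutative unital \Cs* for $a \in A_{sa}$: any maximal commutative $*$-subalgebra $M$ of~$A$ containing $a$ satisfies $M = M^c$ in a \C*, which forces $\{a\}^{cc} \subseteq M^c = M$. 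Combining (c) with Lemma~\ref{lem:commutative} and the commutative case then makes $T|_{\{a\}^{cc}}$ an algebra isomorphism, which carries the subalgebra $C^*(a) \subseteq \{a\}^{cc}$ onto a subalgebra $T(C^*(a)) \subseteq T(\{a\}^{cc}) \subseteq B$. The implication (b) $\Rightarrow$ (c) is then immediate, since $\{a\}^{cc}$ is a commutative unital subalgebra of $A$.

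For (a) $\Rightarrow$ (b), the strategy is to show that a Jordan isomorphism sends every commutative subalgebra $A_0$ to a commutative set in $B$; once this is established, the Jordan identity gives $T(x)T(y) + T(y)T(x) = T(xy + yx) = 2T(xy)$ for $x, y \in A_0$, and commutativity of the image turns this into $T(x)T(y) = T(xy) \in T(A_0)$, closing $T(A_0)$ under multiplication. To establish commutativity of $T(A_0)$, I would pass to primitive quotients of the semisimple algebra $B$ (Lemma~\ref{lem:rad-inv}): for every primitive ideal $P$ of $B$ the prime quotient $B/P$ receives an induced Jordan homomorphism from $A$, and Herstein's classical theorem forces this composite to be either multiplicative or anti-multiplicative. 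Either alternative preserves commutativity of pairs, so $[T(x), T(y)] \in P$ whenever $xy = yx$ in $A$; intersecting over all primitive $P$ yields $[T(x), T(y)] \in \rad{B} = 0$. The delicate point is invoking Herstein's theorem in the right form for associative (not necessarily involutive) algebras and matching its prime-target hypothesis with the primitive quotients of $B$.
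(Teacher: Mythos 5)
Your proposal is correct and follows essentially the same route as the paper: the substantive implications (c), (d) $\Rightarrow$ (a) are handled exactly as in the paper by restricting $T$ to the commutative \Cs* $\{a\}^{cc}$ or $C^*(a)$, invoking semisimplicity and commutativity of the (closed) image via Lemmas~\ref{lem:rad-inv} and~\ref{lem:commutative}, and applying Nagasawa/the commutative case to get $T(a^2)=(Ta)^2$ on $A_{sa}$, then polarising. The only cosmetic differences are that you arrange the implications in a single cycle, and that for (a) $\Rightarrow$ (b) you spell out the standard Herstein/primitive-quotient proof that Jordan isomorphisms onto semisimple algebras preserve commutativity, where the paper simply cites Jacobson--Rickart.
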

\begin{proof}
By Lemma~\ref{lem:rad-inv}, $B$ is automatically semisimple.

\noindent
(a) ${}\Rightarrow{}$ (b)\enspace It is well known that every Jordan isomorphism between semisimple algebras preserves commutativity
(see, e.g., \cite{JaRi}) and therefore restricts to a multiplicative mapping on each commutative subalgebra; as a result, $TA_0$ is
a subalgebra of\/ $B$ for every commutative unital subalgebra\/ $A_0$ of\/~$A$.

\noindent
(b) ${}\Rightarrow{}$ (c) and (b) ${}\Rightarrow{}$ (d)\enspace are trivial.

\noindent
We finally show that (c) and (d) individually imply~(a).
To this end, set $A_0=\{a\}^{cc}$ or $A_0=C^*(a)$, respectively, for an arbitrary fixed element $a\in A_{sa}$.
Note that $\{a\}^{cc}$ is a \Cs* by the Fuglede--Putnam theorem.
Since $T$ is a topological iso\-morph\-ism between $A$ and $B$, the subspace $B_0=TA_0$ is closed in~$B$.
By hypothesis, $B_0$ thus is a unital Banach algebra. Let $T_0\colon A_0\to B_0$ denote the restriction of $T$ to $A_0$ which is
a unital surjective spectral isometry. By Lemma~\ref{lem:rad-inv}, $B_0$ is semisimple and by Lemma~\ref{lem:commutative},
$B_0$ is commutative. Nagasawa's theorem \cite[Theorem~4.1.17]{Aup91} entails that $T_0$ is multiplicative wherefore
$T(a^2)=T_0(a^2)=(T_0a)^2=(Ta)^2$.
It follows that $T$ is a Jordan isomorphism.
\end{proof}
\begin{rem}\label{rem:commutative-general}
If, in the above proposition, $A$ is merely assumed to be a unital semisimple Banach algebra instead of a unital \C*,
the condition that $T\bigl(\{a\}^{cc}\bigr)$ is a subalgebra of\/ $B$ for every element\/ $a\in A$
still yields that $T$ is invertibility preserving. This can be seen by using Lemma~\ref{lem:rad-inv} to obtain
$T_0\,\rad{A_0}=\rad{B_0}$, with $A_0=\{a\}^{cc}$ and $B_0=TA_0$, and therefore a unital surjective spectral isometry
$T_1\colon A_1=A_0/\rad{A_0}\to B_1=B_0/\rad{B_0}$
which must be an isomorphism by Nagasawa's theorem. It follows that
\begin{equation*}
\sigma_A(a)=\sigma_{A_0}(a)=\sigma_{A_1}(a+\rad{A_0})=\sigma_{B_1}(T_0a+\rad{B_0})=\sigma_{B_0}(T_0a)
                         \supseteq\sigma_B(Ta).
\end{equation*}
\end{rem}
We will now combine Proposition~\ref{prop:comm-subalg} with the results in Section~\ref{sect:pre}.
\begin{prop}\label{prop:gen-criteria}
Let\/ $A$ and\/ $B$ be unital semisimple Banach algebras, and let\/ $T\colon A\to B$ be a unital surjective
spectral isometry. Suppose that\/ $B$ has a separating family\/ $\idI$ of closed ideals\/ $I$ such that\/ $B/I$ is semisimple
and that each unital surjective spectral isometry\/ $S^I\colon B/I\to B/I$ is multiplicative or anti-multiplicative.
Then\/ $T$ preserves invertibility. If, moreover, $A$ is a \C*, then\/ $T$ is a Jordan isomorphism.
\end{prop}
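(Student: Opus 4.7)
The strategy is to prove that $T(\{a\}^{cc})$ is a subalgebra of $B$ for every $a\in A$, and then invoke Remark~\ref{rem:commutative-general} for invertibility preservation and Proposition~\ref{prop:comm-subalg}(c) for the Jordan isomorphism conclusion in the \C* case.

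To establish the subalgebra property, fix $a\in A$ and set $A_0=\{a\}^{cc}$ and $B_0=TA_0$. Each ideal $I\in\mathscr I$ satisfies the hypotheses of Lemma~\ref{lem:modI}, so the lemma applies. Reading off from the penultimate step of its proof, we obtain, for every $b_1,b_2\in B_0$, every $x\in\{a\}^c$, and every $I\in\mathscr I$,
\begin{equation*}
T\bigl(T^{-1}(b_1b_2)\,x - x\,T^{-1}(b_1b_2)\bigr) \in I.
\end{equation*}
Because $\mathscr I$ is separating, $\bigcap_{I\in\mathscr I} I=\{0\}$, so the element above vanishes; injectivity of $T$ (which follows from the semisimplicity of $A$, as recalled in the introduction) then gives $T^{-1}(b_1b_2)\,x = x\,T^{-1}(b_1b_2)$ for every $x\in\{a\}^c$. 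Hence $T^{-1}(b_1b_2)\in\{a\}^{cc}=A_0$, so $b_1b_2\in B_0$, proving that $B_0$ is a subalgebra of $B$.

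With this property secured for every $a\in A$, Remark~\ref{rem:commutative-general} yields that $T$ preserves invertibility. If $A$ is additionally a \C*, specialising the same argument to $a\in A_{sa}$ verifies hypothesis~(c) of Proposition~\ref{prop:comm-subalg}, whence $T$ is a Jordan isomorphism.

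The only substantive point is the promotion from commutation modulo each individual ideal of $\mathscr I$ to a genuine commutation relation in $A$; the separating hypothesis and the injectivity of $T$ do precisely this, and all remaining steps are a direct assembly of the earlier results.
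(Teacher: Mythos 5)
Your proof is correct and follows essentially the same route as the paper's: apply Lemma~\ref{lem:modI} for each $I\in\idI$, use the separating hypothesis together with injectivity of $T$ to upgrade the commutation modulo each ideal to genuine commutation, conclude that $T\bigl(\{a\}^{cc}\bigr)$ is a subalgebra, and then cite Remark~\ref{rem:commutative-general} and Proposition~\ref{prop:comm-subalg}. Your explicit rereading of the lemma's conclusion as $T\bigl(T^{-1}(b_1b_2)\,x - x\,T^{-1}(b_1b_2)\bigr)\in I$ is a sensible (and correct) clarification of the coset notation in \eqref{eq:commuting}, but it does not change the argument.
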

\begin{proof}
Let $a\in A$ and put $A_0=\{a\}^{cc}$. Let $B_0=TA_0$. Take $b_1,b_2\in B_0$ and $x\in\{a\}^c$.
By Lemma~\ref{lem:modI}, $T^{-1}(b_1b_2)\,x+I=x\,T^{-1}(b_1b_2)+I$ for every $I\in\idI$. As $\idI$ is separating, this yields
$T^{-1}(b_1b_2)\,x=x\,T^{-1}(b_1b_2)$ which entails that $T^{-1}(b_1b_2)\in A_0$, equivalently, $b_1b_2\in B_0$.
We conclude that $B_0$ is a (closed, unital) subalgebra of~$B$, and Remark~\ref{rem:commutative-general} gives the
first assertion.

If, moreover, $A$ is a \C*, the above argument shows that $T\bigl(\{a\}^{cc}\bigr)$ is a subalgebra of $B$ for every
element $a\in A_{sa}$. Now Proposition~\ref{prop:comm-subalg} completes the proof.
\end{proof}

Applying the above general principles in various concrete situations enables us to obtain new incidences in which
every unital surjective spectral isometry is a Jordan isomorphism.

The next two results were obtained in~\cite{MMRud} under the additional assumption that the spectrum
of the domain algebra is totally disconnected.

\begin{theorem}\label{thm:typeI-hausdorff-prim}
Let\/ $T\colon A\to B$ be a unital surjective spectral isometry from a unital type~\textup{I} \C* with Hausdorff primitive ideal space
onto a unital \C*~$B$. Then\/ $T$ is a Jordan isomorphism.
\end{theorem}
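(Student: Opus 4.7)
The plan is to verify the hypotheses of Proposition~\ref{prop:gen-criteria}: it suffices to exhibit in $B$ a separating family $\idI$ of closed ideals $I$ such that $B/I$ is semisimple and every unital surjective spectral isometry on $B/I$ is multiplicative or anti-multiplicative.

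Since $A$ is unital and type~I with Hausdorff $X:=\prima$, $A$ is in fact liminal; because $A$ is unital each primitive quotient $A/P$ is unital and so isomorphic to a full matrix algebra $M_{n(P)}$. Moreover the natural homomorphism $Z(A)\to C(X)$ is a $*$-isomorphism, and the closed two-sided ideals of $A$ biject with the closed subsets of $X$ via hull--kernel, the singleton $\{x\}$ corresponding to the (maximal) primitive ideal $P_x$. On the $B$-side, Lemma~\ref{lem:rad-inv} gives semisimplicity of $B$, and \cite[Corollary~4.4]{MaSc} combined with Lemma~\ref{lem:commutative} and Nagasawa's theorem yields a unital $*$-isomorphism $T|_{Z(A)}\colon Z(A)\to Z(B)$; in particular $Z(B)\cong C(X)$.

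For each $x\in X$, set $M_x:=\{f\in Z(A):f(x)=0\}$ and let $J_x$ be the closed two-sided ideal of $B$ generated by $T(M_x)$. I would aim to establish (i) each $B/J_x$ is $*$-isomorphic to $M_{n(x)}$, and (ii) $\bigcap_{x\in X}J_x=0$. Granted (i), every unital surjective spectral isometry on $B/J_x$ is multiplicative or anti-multiplicative, by \cite[Corollary~5]{MaSou} together with the classical description of Jordan automorphisms of simple matrix algebras; granted (ii), $\{J_x\}$ is separating, and Proposition~\ref{prop:gen-criteria} then delivers the theorem.

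The hard part will be (i): one must show that the quotients of $B$ inherit the matricial fibre structure that $A$ carries over $Z(A)=C(X)$, although $T$ a priori only preserves the spectral radius. My strategy would be to first prove the containment $T(P_x)\subseteq J_x$, letting $T$ descend to a unital surjective linear map $\bar T_x\colon A/P_x\to B/J_x$; the natural tool is a subharmonicity/Liouville argument in the spirit of Lemma~\ref{lem:modI} and~\cite{CoRe}, exploiting the central $*$-isomorphism $T|_{Z(A)}$ to transport the relation $\overline{M_x\cdot A}=P_x$ from $A$ to $B$. One would then want to promote $\bar T_x$ to a spectral isometry (again through Lemma~\ref{lem:quotient-spisom}-type reasoning), use Lemma~\ref{lem:rad-inv} to infer semisimplicity of $B/J_x$, and finally combine a dimension comparison $\dim(B/J_x)=\dim(A/P_x)=n(x)^2$ with the maximality of $J_x$ --- itself to be inherited from maximality of $P_x$ through the central correspondence --- to force $B/J_x\cong M_{n(x)}$. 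Once (i) is secured, (ii) follows because $T^{-1}(J_x)\subseteq P_x$ and $\bigcap_x P_x=0$ in the semisimple algebra $A$.
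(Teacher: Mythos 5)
Your proposal attacks the problem from the wrong side, and this is what creates the gap. Proposition~\ref{prop:gen-criteria} places its hypotheses on the \emph{codomain} of the spectral isometry, whereas the theorem's hypotheses (type~I, Hausdorff primitive ideal space) sit on the \emph{domain}~$A$. The paper's proof resolves this mismatch with one move: apply Proposition~\ref{prop:gen-criteria} to $T^{-1}\colon B\to A$. Then the structured algebra $A$ is the codomain, and its primitive ideals already form the required separating family with no construction needed: since $\prima$ is Hausdorff, every primitive ideal is maximal, so every irreducible image is simple and unital; type~I forces each such image to contain the compacts and hence to be $M_n(\CC)$; and every unital surjective spectral isometry of $M_n(\CC)$ is multiplicative or anti-multiplicative \cite[Proposition~2]{Aup93}. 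Since the domain $B$ of $T^{-1}$ is a \C*, the proposition makes $T^{-1}$, and hence $T$, a Jordan isomorphism. You instead try to verify the hypotheses for $B$ itself by transporting the fibre structure of $A$ across $T$ via the ideals $J_x$ generated by $T(M_x)$ --- in effect you are trying to prove a structure-transfer theorem ($B$ is again liminal with the same matricial fibres over $X$) that the argument does not require.

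Concretely, your step~(i) is a genuine gap, not a routine verification. You only indicate a strategy for the containment $T(P_x)\subseteq J_x$, but the dimension comparison $\dim(B/J_x)=\dim(A/P_x)=n(x)^2$ that you invoke to identify $B/J_x$ with $M_{n(x)}$ needs the exact equality $T(P_x)=J_x$ (equivalently, that $J_x$ is proper of the right codimension); nothing in your sketch rules out $J_x$ being too large, since a spectral isometry gives no a priori control over the ideal generated by the image of a set. Your step~(ii) likewise rests on $T^{-1}(J_x)\subseteq P_x$, which again presupposes the unproved equality. Everything before that (the description of $A$, the identification $Z(B)\cong Z(A)\cong C(X)$ via \cite[Corollary~4.4]{MaSc} and Nagasawa, and the fact that unital surjective spectral isometries of $M_n(\CC)$ are multiplicative or anti-multiplicative via \cite[Corollary~5]{MaSou}) is correct but is rendered unnecessary once you pass to $T^{-1}$.
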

\begin{proof}
By replacing $T$ with $T^{-1}$, we can interchange the roles of $A$
and~$B$. If $\prim B$ is Hausdorff, every primitive ideal is maximal and thus every irreducible representation $\pi$ maps onto a
simple unital \C*. As we assume $B$ to be of type~I, every irreducible image $\pi(B)$ contains the compact operators and consequently,
is a finite-dimensional simple unital \C*, that is, of the form $M_n(\CC)$. Every unital spectral isometry on $M_n(\CC)$
is multiplicative or anti-multiplicative (\cite[Proposition~2]{Aup93}; for an alternative argument, see
\cite[Example~5.4]{Mat09}). Therefore, the hypotheses of Proposition~\ref{prop:gen-criteria} are satisfied
and thus $T$ is a Jordan isomorphism.
\end{proof}
\begin{theorem}\label{thm:typeI-hausdorff-sep}
Let\/ $T\colon A\to B$ be a unital surjective spectral isometry from a separable unital \C* with Hausdorff spectrum
onto a unital \C*~$B$. Then\/ $T$ is a Jordan isomorphism.
\end{theorem}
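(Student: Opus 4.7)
The plan is to reduce the statement to the preceding Theorem~\ref{thm:typeI-hausdorff-prim}. The key structural input will be Glimm's classical theorem: a separable \C* $A$ is of type~I if and only if its spectrum $\hat A$ is $T_0$. Since every Hausdorff space is $T_0$, the hypothesis that $\hat A$ is Hausdorff will force $A$ to be of type~I.

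Once $A$ is known to be of type~I, the canonical continuous surjection $\hat A\to\prim A$ sending an equivalence class $[\pi]$ to the primitive ideal $\ker\pi$ becomes a bijection, and hence a homeomorphism for the respective Jacobson topologies. Consequently $\prim A$ inherits Hausdorffness from~$\hat A$. At this point $A$ satisfies \emph{all} the hypotheses of Theorem~\ref{thm:typeI-hausdorff-prim}, which immediately delivers the desired conclusion that $T$ is a Jordan isomorphism.

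The only substantive ingredient in this plan is Glimm's theorem; the rest is a purely topological observation, and I do not foresee any genuine obstacle. If one preferred to avoid invoking Theorem~\ref{thm:typeI-hausdorff-prim} as a black box, one could simply inline its short argument: after replacing $T$ with $T^{-1}$ in order to move the hypothesis onto the codomain, each irreducible representation of the (now type~I, Hausdorff-primitive-ideal-space) algebra has image a finite-dimensional matrix algebra $M_n(\CC)$, and Aupetit's result that every unital spectral isometry on $M_n(\CC)$ is multiplicative or anti-multiplicative puts one in position to apply Proposition~\ref{prop:gen-criteria} to the separating family of kernels of irreducible representations. Either route depends crucially on the fact, guaranteed by separability plus Hausdorffness of the spectrum, that $A$ is of type~I.
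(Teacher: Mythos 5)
Your proposal is correct and takes essentially the same route as the paper: both arguments use Glimm's theorem for separable \C*s together with the identification of the Hausdorff spectrum $\hat A$ with $\prima$ to reduce to the situation of Theorem~\ref{thm:typeI-hausdorff-prim}. The only (immaterial) difference is that the paper applies Glimm's theorem to the simple separable unital quotients to conclude they are of the form $M_n(\CC)$, whereas you apply it to $A$ itself to get type~I first.
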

\begin{proof}
If the spectrum of $A$, that is, the space of all equivalence classes of irreducible representations, is Hausdorff then it is homeomorphic
to $\prima$; hence all irreducible images are unital and simple. Since they are separable too, Glimm's theorem
\cite[Theorem~6.8.7]{Ped} entails that they are of the form $M_n(\CC)$ (i.e., $A$ is of type~I); compare \cite[Corollary~12]{MMRud}.
Now the proof is completed as above.
\end{proof}

The class of type~I \C*s with Hausdorff spectrum is quite large, it encompasses in particular all
continuous-trace \C*s. For a more detailed discussion, see~\cite{MMRud}.
On the other hand, our methods also apply to very different non-simple \C*s which are far from type~I.

\begin{theorem}\label{thm:main}
Let\/ $A$ and\/ $B$ be unital \C*s, and let\/ $T\colon A\to B$ be a unital surjective spectral isometry.
Suppose that\/ $A$ has real rank zero and no tracial states and that\/ $\prima$ contains a dense subsets of closed points.
Then\/ $T$ is a Jordan isomorphism.
\end{theorem}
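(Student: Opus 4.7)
The plan is to apply Proposition~\ref{prop:gen-criteria} to the inverse spectral isometry $T^{-1}\colon B\to A$, which is again unital and surjective. With the roles of $A$ and $B$ exchanged, the proposition demands a separating family $\idI$ of closed ideals of the codomain $A$ such that each $A/I$ is semisimple and each unital surjective spectral isometry on $A/I$ is multiplicative or anti-multiplicative; its conclusion, together with the hypothesis that $B$ is a C*-algebra (now playing the role of the domain C*-algebra in the proposition), will give that $T^{-1}$, and hence $T$, is a Jordan isomorphism.

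For $\idI$ I take the closed points of $\prima$, regarded as maximal closed two-sided ideals of~$A$. By hypothesis this family is dense in the hull--kernel topology of $\prima$, and since $A$ is semisimple this density is equivalent to $\bigcap_{M\in\idI}M=0$, so $\idI$ is separating. Each quotient $A/M$ is then a simple unital C*-algebra, hence automatically semisimple. Real rank zero passes to quotients by closed ideals (Brown--Pedersen), and any tracial state on $A/M$ would pull back through the canonical surjection to a tracial state on~$A$; consequently each $A/M$ is a simple unital C*-algebra of real rank zero without tracial states, which by the standard structural dichotomy for such algebras is purely infinite.

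In this purely infinite simple setting the relevant theorem from~\cite{MM04b} applies and gives that every unital surjective spectral isometry on $A/M$ is a Jordan isomorphism; since $A/M$ is simple, Herstein's theorem upgrades ``Jordan isomorphism'' to ``multiplicative or anti-multiplicative'', which is exactly the hypothesis required by Proposition~\ref{prop:gen-criteria}. I expect the main obstacle to be precisely this last verification: the argument rests on the implication ``simple unital + real rank zero + no tracial states $\Rightarrow$ purely infinite'' and on the precise form of the spectral-isometry result cited from~\cite{MM04b}, so some care is needed with the distinction between tracial states and quasitraces in the non-exact case, or else one must invoke a form of the \cite{MM04b} result tailored to the ``real rank zero, no tracial states'' hypothesis that bypasses pure infiniteness altogether.
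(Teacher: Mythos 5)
Your argument is correct and is essentially the paper's own proof: swap the roles of $A$ and $B$ (i.e., pass to $T^{-1}$), use the dense set of closed points of $\prima$ as a separating family of maximal ideals, and feed the resulting simple semisimple quotients into Proposition~\ref{prop:gen-criteria} after quoting \cite{MM04b} and Herstein's theorem. The only deviation is your detour through pure infiniteness, which is unnecessary (and, as you note, delicate because of the trace/quasitrace issue): the paper cites \cite[Theorem~3.1]{MM04b} directly for simple unital \C*s of real rank zero without tracial states, exactly the tailored form you anticipated in your closing caveat.
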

\begin{proof}
As before we swap the roles of $A$ and $B$ to obtain the additional assumptions on~$B$.
By hypothesis, there exists a separating family $\idI$ of maximal ideals of~$B$. Let $I\in\idI$.
Then $B/I$ has real rank zero and no tracial states (and, of course, is unital simple).
By \cite[Theorem~3.1]{MM04b}, every unital spectral isometry from $B/I$ onto itself is a Jordan isomorphism
and it is well known that Jordan isomorphisms of simple algebras are either multiplicative or anti-multiplicative.
Consequently, we can apply Proposition~\ref{prop:gen-criteria} to obtain the result.
\end{proof}

\begin{rem}\label{rem:glimm-procedure}
The reduction theory employed in this paper is closely related to the reduction via Glimm ideals used in~\cite{MaSou},
\cite{MMRud} and~\cite{Co09}. For a unital \C* $A$, $\prima$ is Hausdorff if and only if every Glimm ideal of $A$ is maximal
\cite[Lemma~9]{MMRud}. Moreover, the condition that $T\bigl(\{a\}^{cc}\bigr)$ is an algebra for every selfadjoint $a \in A$
implies that $T$ is \textit{$Z$-multiplicative}, i.e., $T(xz)=(Tx)(Tz)$ for all $x\in A$ and all $z\in Z(A)$. This in turn entails
invariance of Glimm ideals which is the essential tool in the Glimm reduction procedure.
\end{rem}

\end{document}